\newtheorem{thm}{Theorem}
\newtheorem{cor}[thm]{Corollary}
\newtheorem{lemma}[thm]{Lemma}
\newcommand{\bb}[1]{\mathbb{#1}}
\newcommand{\cl}[1]{\mathcal{#1}}
\begin{document}

\title[Agler's Factorization]{An Operator Algebra Proof of the Agler and Nevanlinna Factorization Theorems}

\author[S.~Lata]{Sneh Lata}
\address{Department of Mathematics, University of Houston,
Houston, Texas 77204-3476, U.S.A.}
\email{snehlata@math.uh.edu}

\author[M.~Mittal]{Meghna Mittal}
\address{Department of Mathematics, University of Houston,
Houston, Texas 77204-3476, U.S.A.}
\email{Mittal@math.uh.edu}

\author[V.~I.~Paulsen]{Vern I.~Paulsen}
\address{Department of Mathematics, University of Houston,
Houston, Texas 77204-3476, U.S.A.}
\email{vern@math.uh.edu}

\thanks{}
\subjclass[2000]{Primary 46L15; Secondary 47L25}

\begin{abstract}
We give a short direct proof of the Agler and Nevanlinna factorization theorems that uses the Blecher-Ruan-Sinclair characterization of operator algebras.
The key ingredient of this proof is an operator algebra factorization theorem.  
Our proof provides some additional information about these factorizations in the case of polynomials.
\end{abstract}

\maketitle


\section{Introduction} 

Given an analytic function $f$ on the open unit disk $\bb D,$ Nevanlinna proved that the supremum of $f$ over the disk is less than or equal to one if and only if the function $\frac{1 - f(z)\overline{f(w)}}{1-z \overline{w}}$ is a positive definite function on $\bb D.$ This latter condition is equivalent to the existence of a positive definite function $K: \bb D^2 \to \bb C$ which is analytic in $z$ and coanalytic in $w$ such that $1 - f(z) \overline{f(w)} = (1 - z \overline{w}) K(z,w).$  Here we are following the standard usage of calling a function {\em positive definite} if for every choice of finitely many points the matrix, $(K(z_i,z_j))$ is positive semidefinite.
Later this result was generalized to a parallel result for analytic matrix-valued functions on the disk whose supremum norm was less than or equal to one.

A remarkable extension of this result to more than one variable was given by Agler\cite{Ag}. To explain his result we need to first introduce what has come to be known as the {\em Schur-Agler space} of analytic functions on a polydisk. Given a natural number $N$ and $I= (i_1, \ldots, i_N) \in \bb N^N$ we set $z^I = z_1^{i_1} \cdots z_N^{i_N},$ so that every analytic function $f: \bb D^N \to \bb C$ can be written as a power series, $f(z) = \sum_{I} a_I z^I.$ 
If $T= (T_1, \ldots, T_N)$ is an $N$-tuple of operators on some Hilbert space $\cl H$ which pairwise commute and satisfy $\|T_i\| < 1, i=1, \ldots N,$ then we will call $T$ a {\em commuting N-tuple of strict contractions.} It is easily seen that if $T$ is a commuting $N$-tuple of strict contractions then the power series $f(T) = \sum_{I} a_I T^I$ converges and defines a bounded operator on $\cl H.$  The {\em Schur-Agler space} denoted by $H_{u}(\bb D^N)$ is defined to be the set of analytic functions on $\bb D^N$ such that
$\|f\|_{u} = \sup \{ \|f(T)\| \}$ is finite, where the supremum is taken over all sets of commuting N-tuples of strict contractions and all Hilbert spaces. In fact, the same supremum is attained by restricting to all commuting N-tuples of strict contractions on a fixed separable infinite dimensional Hilbert space.
It is fairly easy to see that $H_{u}(\bb D^N)$ is a Banach algebra in the norm $\| \cdot \|_{u}.$
The set of such functions with $\|f\|_{u} \le 1$ is called the {\em Schur-Agler class.}

Given a vector space $V$, we let $M_{m,n}(V)$ denote the set of $m \times n$ matrices with entries from $V.$  Given $F=(f_{i,j}) \in M_{m,n}(H_{u}(\bb D^N)),$ we set $\|F\|_{u} = \sup \{ \|(f_{i,j}(T))\| \},$ where the supremum is again over all commuting N-tuples of strict contractions and the norm of $(f_{i,j}(T))$ is computed by regarding the operator matrix as an operator from the direct sum of $n$ copies of the Hilbert space to the direct sum of $m$ copies of the Hilbert space.

Note that when the Hilbert space is one-dimensional, then every commuting N-tuple of strict contractions $T$ is of the form $T= z = (z_1, \ldots, z_N) \in \bb D^N,$ so that $\|f\|_{\infty} = \sup \{ |f(z)|: z \in \bb D^N \} \le \|f\|_{u}$ and hence, $H_{u}(\bb D^N) \subseteq H^{\infty}(\bb D^N),$ where this latter space denotes the set of bounded analytic functions on the polydisk $\bb D^N.$ When $N=1,2$ it is known that these two spaces of functions are equal and that $\|F\|_{u} = \|F\|_{\infty},$ for all $m$ and $n$.  This is a consequence of theorems of J.von Neumann\cite{Ne}, Sz.-Nagy\cite{Na} and Ando\cite{An}.

For $N \ge 3$, it is known that these two norms are not equal. 
However, it is still unknown, for a general $N\ge 3$ if these two
Banach spaces define the same sets of functions, since by the bounded
inverse theorem, $H_{u}(\bb D^N) = H^{\infty}(\bb D^N)$ if and only if
there is a constant $K_N$ such that $\|f\|_{u} \le K_N
\|f\|_{\infty}.$ The existence of such a constant is a problem that has been open since the early
1960's. For more details on all of these ideas one can see Chapters 5 and 18 of \cite{Pa}. A note of caution, in \cite{Ag} it is stated that $H_u(\bb
D^N) \ne H^{\infty}(\bb D^N), N \ge 3,$ but what is meant is that the
norms are not equal and the question of whether or not they are equal
as sets is, indeed, still open.

Agler's factorization theorem\cite{Ag} says that $F \in
M_{m,n}(H_{u}(\bb D^N))$ with $\|F\|_{u} \le 1$ if and only if there
exist positive definite matrix-valued functions $K_i: \bb D^N \times
\bb D^N \to M_m, i = 1, \dots, N,$ which are analytic in the first N
variables and coanalytic in the second N variables, such that $I_m -
F(z)F(w)^* = \sum_{i=1}^N(1 -z_i \overline{w_i})K_i(z,w).$ When
$m=n=N=1,$ Agler's result reduces to Nevanlinna's factorization
theorem since in that case $\|f\|_{\infty} = \|f\|_{u}$ by von
Neumann's inequality\cite{Ne}. The book by Agler and McCarthy\cite{AM}
is an excellent source for further information and background on this
result.

In this paper we show that Agler's factorization result is a direct
consequence of the factorization ideas of Blecher and the third
author\cite{BP} arising from the abstract theory of operator algebras.
Our proof has the advantage of being relatively short, assuming the
abstract characterization of operator algebras\cite{BRS}, and of
giving some possibly new information on the form of the positive
definite functions $K_i$ that appear in the Agler factorization.
Our proof uses an observation of McCullough's, summarized in Theorem 3,
which essentially says that the Fejer
kernel behaves as an approximate identity in the $u$-norm as well as
in the sup-norm.

\section{Main Results}

We let $\cl P_N \subseteq H_{u}(\bb D^N)$ denote the subspace spanned by the polynomials in $N$ variables endowed with the $\| \cdot \|_{u}$ norm.
The equivalence of (i) and (ii) in the following theorem is a restatement of results found in \cite{BP} and in the book \cite[Chapter 18]{Pa}, we will sketch a proof for clarity.  The equivalence of (i) and (iii) is a useful variant of Agler's factorization result for polynomials.

It will be convenient to say that matrices, $A_1, \ldots A_m$ are of {\em compatible sizes} if the product, $A_1 \cdots A_m$ exists, that is, provided that each $A_i$ is an $n_i \times n_{i+1}$ matrix.

\begin{thm}
Let $ P=(p_{ij}) \in M_{m,n}(\cl {P}_N).$ Then the following are equivalent:
\begin{itemize}
\item[(i)]$\|P\|_{u} < 1,$
\item [(ii)] there exists an integer l, matrices of scalars $C_j, 1 \le j \le l$ with $\|C_j\| <1 \ , \ 0\leq j \leq l$ and 
diagonal matrices $D_j, 1\leq j\leq l,$ each consisting of monomials in one of the variables, $z_{i_j}$ which are compatible sizes and are such that $P(z)= C_0D_1(z_{i_1})...D_l(z_{i_1})C_l,$
\item [(iii)] there exists a positive, invertible matrix $ R \in M_{m}$ and matrices of polynomials $P_i,  0\leq i\leq N, $ such that \\
$ I_m-P(z)P(w)^*= R + P_0(z)P_0(w)^* + \sum_{i=1}^{N} (1-z_i\overline{w_i})P_i(z)P_i(w)^*$ 
where $ z=(z_1,..., z_N),  w=(w_1,..., w_N) \in \bb D^N.$
\end{itemize}
\end{thm}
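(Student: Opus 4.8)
The plan is to prove a cycle of implications (i) $\Rightarrow$ (ii) $\Rightarrow$ (iii) $\Rightarrow$ (i), with the bulk of the new content in (ii) $\Rightarrow$ (iii), and (iii) $\Rightarrow$ (i) essentially a direct verification.

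For (i) $\Rightarrow$ (ii), I would invoke the Blecher--Ruan--Sinclair characterization of operator algebras together with the factorization results of Blecher and Paulsen referenced in the excerpt. The idea is that $\cl P_N$, with its $\|\cdot\|_u$ matrix norms, is (completely isometrically) an operator algebra; the coordinate functions $z_1,\dots,z_N$ generate it. One uses McCullough's observation (Theorem 3 in the excerpt) to replace $P$ by a nearby polynomial matrix (Fejer-type truncation) while keeping $\|P\|_u<1$. Then the operator algebra factorization theorem of \cite{BP} expresses a strict contraction in $M_{m,n}(\cl P_N)$ as a product $C_0 D_1 C_1 \cdots D_l C_l$ where the $D_j$ are diagonal with entries in the generating set raised to powers — i.e., monomials in a single variable $z_{i_j}$ — and the $C_j$ are scalar strict contractions of compatible sizes. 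This is where I'd spend care making sure the "diagonal matrix of monomials in one variable" shape is exactly what the abstract factorization delivers.

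For (ii) $\Rightarrow$ (iii), start from $P(z) = C_0 D_1(z_{i_1}) C_1 \cdots D_l(z_{i_l}) C_l$ with all $\|C_j\|<1$. The strategy is a telescoping/"lurking isometry" style computation: peel off one factor $D_j(z_{i_j})$ at a time. Writing $D_j(z_{i_j}) = z_{i_j}^{k_j} E_j$ where $E_j$ is a fixed $0$-$1$ diagonal projection (so $E_j^*E_j = E_j$), one uses the elementary identity $I - z^k \overline{w}^k = (1 - z\overline{w})\sum_{t=0}^{k-1} z^t\overline{w}^t$ to convert each monomial block into a sum of $(1 - z_{i}\overline{w_i})$-weighted positive terms plus lower-order remainders. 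Because each $\|C_j\|<1$, each step also produces a strictly positive leftover $\varepsilon_j (I - C_j C_j^*) > 0$, and collecting all these leftovers gives the positive invertible $R$. The $z_i$-free terms that do not fit the $(1-z_i\overline{w_i})$ pattern get collected into $P_0(z)P_0(w)^*$. Organizing this telescoping so that the coefficients of each $(1 - z_i\overline{w_i})$ genuinely assemble into $P_i(z)P_i(w)^*$ for a single polynomial matrix $P_i$ (rather than a sum of such) is the main bookkeeping obstacle — one may need to stack the various contributions into a single tall polynomial matrix.

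For (iii) $\Rightarrow$ (i), given the identity $I_m - P(z)P(w)^* = R + P_0(z)P_0(w)^* + \sum_{i=1}^N (1 - z_i\overline{w_i})P_i(z)P_i(w)^*$ with $R>0$, evaluate at a commuting $N$-tuple of strict contractions $T=(T_1,\dots,T_N)$ by the standard substitution $z_i \mapsto T_i$, $\overline{w_i}\mapsto T_i^*$ (tensored with identities as needed). Each term $(1 - z_i\overline{w_i})P_i P_i^*$ becomes $P_i(T)(I - T_iT_i^*)P_i(T)^*$-type expression which is positive since $\|T_i\|<1$ makes $I - T_iT_i^* \ge 0$; the term $P_0(T)P_0(T)^* \ge 0$; and $R\otimes I \ge 0$. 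Hence $I - P(T)P(T)^* \ge R\otimes I > 0$, so $\|P(T)\| \le (1-\delta)^{1/2} < 1$ with a bound uniform in $T$ coming from $\|R^{-1}\|$, giving $\|P\|_u < 1$. The only subtlety here is justifying that the hereditary functional-calculus substitution preserves positivity in several commuting variables, which is routine for polynomial identities of this "sum of Hermitian squares times $(1-z_i\overline{w_i})$" type and can be cited from \cite{Ag} or \cite{AM}. I expect (ii) $\Rightarrow$ (iii) to be the genuine heart of the argument and the place where the "additional information" advertised in the abstract actually appears.
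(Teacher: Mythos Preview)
Your proposal follows the paper's proof essentially step for step: the cycle (i)$\Rightarrow$(ii)$\Rightarrow$(iii)$\Rightarrow$(i), with (i)$\Rightarrow$(ii) via the Blecher--Ruan--Sinclair theorem and the Blecher--Paulsen factorization norm, (ii)$\Rightarrow$(iii) by a telescoping induction on the length $l$ of the factorization using the identity $1-z^k\overline{w}^k=(1-z\overline{w})\sum_{t=0}^{k-1}z^t\overline{w}^t$, and (iii)$\Rightarrow$(i) by substituting a commuting tuple of strict contractions and reading off $I-P(T)P(T)^*\ge R\otimes I>0$.

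Two small corrections. First, the Fej\'er-kernel approximation (Theorem~3) has no role in (i)$\Rightarrow$(ii): $P$ is already a polynomial, and that tool is only used later to pass from the polynomial theorem to the full Agler theorem for $H_u(\bb D^N)$. What is actually needed in (i)$\Rightarrow$(ii) is to define the auxiliary norm $\|P\|_{m,n}=\inf\|C_0\|\cdots\|C_l\|$ over all such factorizations, verify the BRS axioms, and conclude $\|P\|_{m,n}\le\|P\|_u$ from the resulting completely isometric representation. Second, a diagonal matrix $D_j(z_{i_j})$ of monomials is \emph{not} generally of the form $z_{i_j}^{k_j}E_j$ with a single exponent: distinct diagonal entries may carry distinct powers $z_{i_j}^{n_1},z_{i_j}^{n_2},\ldots$. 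The paper applies the geometric-series identity entry by entry, producing row vectors $A_k(z)=(1,z,\ldots,z^{n_k-1})$ of varying lengths, and then assembles them block-diagonally so that $I-D_j(z)D_j(w)^*=(1-z_{i_j}\overline{w_{i_j}})A(z)A(w)^*$. Your ``stack the contributions into a single tall polynomial matrix'' is exactly this maneuver, so the bookkeeping you anticipate goes through once the description of $D_j$ is corrected.
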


\begin{proof}
We will first prove that (i) implies (ii).
Note that for $m,n \in \bb N$ and $P=(p_{ij}) \in \cl M_{m, n}(\cl P_N),\\
\|P\|_{u} = sup\{ \|(\pi(p_{ij}))\| \},$ where the supremum is taken all unital algebra homomorphisms 
$\pi : \cl P_N \to \cl B(\cl H)$ such that $\|\pi(z_i)\|\leq1$ for each $i = 1,\dots,N$ and all Hilbert spaces $\cl H.\\$
The proof that (i) implies (ii) is given in \cite{BP} and \cite{Pa}. 
To illustrate how operator algebras apply, we give the outline of the
proof.

For each $m,n \in \bb N, $ one proves that
$\|P\|_{m,n} := inf\{ \|C_0\| \dots \|C_l\| \},$ defines a norm on $M_{m,n}(\cl P_N),$ 
where the infimum is taken over all $l$ and all ways to factor $P(z)=C_0D_1(z_{i_1})\dots D_l(z_{i_l})C_l$ 
as a product of matrices of compatible sizes with $C_j, 0 \leq j \leq l$ matrices of scalars and diagonal matrices $D_j, 1 \le j \le l$ whose diagonal entries are monomials in one of the variable $z_{i_j}$

Moreover, one can verify that $\cl M_{m,n}(\cl P_N)$ with this family $\{ \|.\|_{m, n}\}_{m,n}$ of norms satisfies the axioms for  
an abstract unital operator algebra as given in \cite{BRS} and hence
by the Blecher-Ruan-Sinclair representation theorem \cite{BRS}(see also \cite{Pa}) there exists a Hilbert space 
$\cl H$ and a unital completely isometric isomorphism $\pi : \cl P_N \longrightarrow \cl B(\cl H).\\$
Thus, for every $m,n \in \bb N $ and for every $ P =(p_{ij})\in \cl M_{m,n}(\cl P_N),$ 
we have $ \|P\|_{m,n} = \|\pi(p_{ij})\|.$ However, $\|\pi(z_i)\|= \|z_i\|_{1,1} \le 1$ and so, by the remark at the beginning of the proof,  $\|P\|_{m,n} =\|(\pi(p_{ij}))\|\leq \|P\|_{u}.\\$
This completes the proof that (i) implies (ii).

We will now prove that (ii) implies (iii).
Suppose that $P$ has a factorization as in (ii). We will use induction on $l$ to prove that (iii) holds.

First, assume that $l=1$ so that
$P(z) = C_0D_1(z_{i_1})C_1.$
Then,
\begin{eqnarray}
I_m - P(z)P(w)^* &=& I_m - ( C_0D_1(z_{i_1})C_1 ){( C_0D_1(z_{i_1})C_1 )}^* \nonumber \\
                   &=& \left(I_m - C_0C_0^*\right) + C_0\left(I - D_1(z_{i_1}){D_1(w_{i_1})}^*\right)C_0^* +\nonumber \\
                   && C_0D_1(z_{i_1})\left(1 -  C_1C_1^*\right)D_1(w_{i_1})^*C_0^*.\label{a} 
\end{eqnarray}
Since $D_1(z_{i_1})$ is a diagonal matrix of monomials in $z_{i_1},$ the $(k,k)$-th diagonal entry of $I - D_1(z)D_1(w)^*$ is $1 - z_{i_1}^{n_k^1}\overline{w_{i_1}}^{n_k^1}$ for some $n_k.$ Thus,
\begin{eqnarray*}
1 - z_{i_1}^{n_k^1}\overline{w_{i_1}}^{n_k^1} &=& (1 - z_{i_1}\overline{w_{i_1}}^{n_k^1})(1 + z_{i_1}\overline{w_{i_1}} + \dots + z_{i_1}^{n_k^1-1}\overline{w_{i_1}}^{n_k^1-1})
                    \\  &=& (1 - z_{i_1}\overline{w_{i_1}})A_k^1(z_{i_1})A_k^1(w_{i_1})^*, \end{eqnarray*}
where $A_k^1(z_{i_1}) = (1, z_{i_1}, z_{i_1}^2, \ldots,
z_{i_1}^{n_k^1-1}),$ is a $1\times n_k^1$ matrix of monomials in $z_{i_1}$
Hence,
$$C_0 (1 - D_1(z_{i_1})D_1(w_{i_1})^* )C_0^* 
= (1-z_{i_1} \overline{w_{i_1}}) C_0A(z_i)A(w_i)^*C_0^*,$$
where $A(z_i)$ is the direct sum of the matrices, $A^1_k.$ 
Therefore,
\begin{equation}\label{b}
C_0(1-D_1(z)D_1(w)^*)C_0^* = (1-z_{i_1}\overline{w_{i_1}})P_{i_1}(z)P_{i_1}(w)^* , \\
\end{equation} where
$$P_{i_1}(z) = C_0A_1(z_{i_1}).$$  

Setting $P_0(z) = C_0D_1(z_{i_1})(I- C_1C_1^*)^{1/2},$ we have that 
$$I_m - P(z)P(w)^* = (I_m - C_0C_0^*) + P_0(z)P_0(w)^* + (1 - z_{i_1} \overline{w_{i_1}})P_{i_1}(z)P_{i_1}(w)^*,$$ so the form (iii) holds, when $l=1.$

We now assume that the form (iii) holds for any $P(z)$ that has a factorization of length at most $l-1,$ and assume that\\ $P(z) = C_0D_1(z_{i_1}) \cdots D_{l-1}(z_{i_{l-1}}) C_{l-1} D_{l}(z_{i_{l}}) C_{l} = C_0 D_1(z_{i_1})P_{l-1}(z),$ where $P_{l-1}(z)$ has a factorization of length $l-1.$

Note that a sum of expressions such as on the right hand side of (iii)
is again such an expression. For example, 
\begin{multline*}
R_0 + P_0(z)P_0(w)^* + \sum_{i=1}^N (1 - z_i \overline{w_i})
P_i(z)P_i(w)^* \\+ B_0 + Q_0(z)Q_0(w)^* + \sum_{i=1}^N (1-z_i
\overline{w_i}) Q_i(z)Q_i(w)^* \\= H_0 + S_0(z)S_0(w)^* + \sum_{i=1}^N
(1- z_i \overline{w_i}) S_i(z)S_i(w)^*,
\end{multline*} where $H_0 = R_0 + B_0$ is positive and invertible, and $S_i(z) = (P_i(z), Q_i(z)).$ Thus, it will be sufficient to show that $(I_m - P(z)P(w)^*)$ is a sum of expressions as above.

To this end we have that,
\begin{multline*}
I_m - P(z)P(w)^* = (I_m - C_0C_0^*) + C_0(I -
D_1(z_{i_1})D_1(w_{l_1})^*)C_0^* \\+ (C_0D_1(z_{i_1}))(I -
P_{l-1}(z)P_{l-1}(w)^*)(D_1(w_{i_1})^*C_0^*).
\end{multline*}
The first two of these three terms can be seen by the above arguments
to be of the form as on the right hand side of (iii). The quantity
$(I- P_{l-1}(z)P_{l-1}(w)^*)$ is of this form by the inductive
hypothesis. Finally, note that if $H(z,w)$ can be written as such a
sum, then so too can $Q(z)H(z,w)Q(w)^*,$ so it then follows that the
third term is also of the required form.

Finally, we will prove (iii) implies (i). 
Suppose the (iii) holds. Then for any commuting $N$-tuple of strict contractions $T = (T_1, \ldots, T_N),$ using the argument of 
uniqueness of power series, we have
\begin{align*}
 I_m - P(T)P(T)^* &= R + P_0(T)P_0(T)^* + \sum_{i=1}^NP_i(T)(1-T_iT_i^*)P_i(T)^*.
\end{align*}
Clearly, each term on the right hand side of the above inequality is positive and since $R$ is strictly positive, say $R \ge \delta I_m$ for some scalar $\delta > 0,$ we have that $I_m - P(T)P(T)^* \ge \delta I_m\\$
Therefore $ (1- \delta)I_m \ge P(T)P(T)^* ,$ which implies $\|P(T)\| \le \sqrt{1- \delta}.$
Thus, since $T$ was arbitrary,  $\|P\|_{\cl U} \le \sqrt{1- \delta} < 1,$ which proves (i).
\end{proof}

\begin{cor} Let $ P=(p_{ij}) \in M_{m,n}(\cl {P}_N).$ If $\|P\|_u < 1,$ then there exist positive definite matrix-valued functions, $K_i: \bb D^N \times \bb D^N \to M_m, 0 \le i \le N,$ whose components are polynomials such that
$$I_m - P(z)P(w)^* = K_0(z,w) + \sum_{i=1}^N (1 - z_i \overline{w_i})K_i(z,w),$$
for all $z,w \in \bb D^N.$
\end{cor}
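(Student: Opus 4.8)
The plan is to read this off directly from the implication (i) $\Rightarrow$ (iii) of the preceding theorem. Since $P \in M_{m,n}(\cl P_N)$ with $\|P\|_u < 1$, that theorem supplies a positive, invertible matrix $R \in M_m$ and matrices of polynomials $P_i$, $0 \le i \le N$, with
$$I_m - P(z)P(w)^* = R + P_0(z)P_0(w)^* + \sum_{i=1}^N (1 - z_i\overline{w_i})\,P_i(z)P_i(w)^*.$$
I would then simply define $K_0(z,w) = R + P_0(z)P_0(w)^*$ and $K_i(z,w) = P_i(z)P_i(w)^*$ for $1 \le i \le N$. With these choices the asserted identity is exactly the one coming from (iii), and every entry of every $K_i$ is a polynomial in $z$ and $\overline{w}$ because each $P_i$ is a matrix of polynomials and $R$ has constant entries.

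The only thing left to verify is that each $K_i$ is positive definite in the sense fixed in the introduction, and this is routine. Fix finitely many points $w^{(1)},\dots,w^{(k)} \in \bb D^N$. For $1 \le i \le N$ the block matrix $\big(K_i(w^{(s)},w^{(t)})\big)_{s,t} = \big(P_i(w^{(s)})P_i(w^{(t)})^*\big)_{s,t}$ equals $VV^*$, where $V$ is the block column whose $s$-th block is $P_i(w^{(s)})$; hence it is positive semidefinite. For $K_0$ one adds to such a matrix the block matrix all of whose $m \times m$ blocks equal $R$, namely $J_k \otimes R$, where $J_k$ is the $k \times k$ all-ones matrix; this is positive semidefinite because $R \ge 0$. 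A sum of positive semidefinite matrices is positive semidefinite, so $K_0$ is positive definite as well, which completes the argument.

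There is essentially no obstacle here: the corollary is just the restatement of the factorization in part (iii) in the language of positive definite kernels, the one piece of bookkeeping being the absorption of the constant term $R$ together with the quadratic term $P_0(z)P_0(w)^*$ into the single positive definite kernel $K_0$. (It is worth noting that the hypothesis $\|P\|_u < 1$, rather than $\|P\|_u \le 1$, is what lets us keep the strictly positive summand $R$ and thereby obtain genuinely polynomial kernels $K_i$, which is the extra information over the classical Agler factorization.)
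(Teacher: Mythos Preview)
Your proof is correct and follows exactly the same approach as the paper: define $K_0(z,w) = R + P_0(z)P_0(w)^*$ and $K_i(z,w) = P_i(z)P_i(w)^*$ for $1 \le i \le N$, straight from part (iii) of the theorem. The paper's proof is a single sentence doing precisely this; your additional verification that the $K_i$ are positive definite and your closing remarks are extra commentary the paper omits, but the argument is the same.
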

\begin{proof} Usiing the form in (iii), we set $K_0(z,w) = R + P_0(z)P_0(w)^*$ and $K_i(z,w) = P_i(z)P_i(w)^*, 1 \le i \le N.$
\end{proof}

One of the advantages of the above factorization over Agler's form is that, since each positive definite function $K_i$ consists of polynomials, the associated reproducing kernel Hilbert spaces will be finite dimensional spaces of $\bb C^m$-valued polynomials.
  
To see the connection between the factorization occurring in our (iii) and Agler's factorization, note that each of the terms, $K_i(z,w) = P_i(z)P_i(w)^*, i \ge 2$ is a positive definite matrix-valued function that is analytic in the $z$ variables and coanalytic in the $w$ variables. If we set
$$K_1(z,w) =
 \frac{R+P_0(z)P_0(w)^*+P_1(z)P_1(w)^*}{1-z_1\overline w_1},$$
Then $K_1(z,w)$ is also a positive definite matrix-valued function that is analytic in $z$ and coanalytic in $w$ and we have that
$I_m - P(z)P(w)^* = \sum_{i=1}^N(1- z_i \overline w_i) K_i(z,w).$
Unfortunately, when written in this form, the reproducing kernel Hilbert space associated with this new function $K_1,$ will generally be an infinite dimensional Hilbert space of analytic $\bb C^m$-valued functions, even though $P(z)$ was only polynomial valued.

To pass from the above theorem for polynomials to the full version of Agler's theorem, we need to first show that functions in $M_{m,n}(H_{u}(\bb D^N))$ have nice approximations by matrices of polynomials.
Note that if we write a matrix of analytic functions 
$F \in M_{m,n}(H_{u}(\bb D^N)),  F=(f_{i,j})$ as a power series,
$F(z) = \sum_I A_I z^I,$ where $A_I \in M_{m,n}$ are scalar matrices, then for any commuting $N$-tuple of strict contractions we will have that $F(T) = \sum A_I \otimes T^I$ where by the tensor product of an $m \times n$ scalar matrix $B=(b_{i,j})$ and an operator $R \in B(\cl H)$ we mean the operator $B \otimes R = (b_{i,j}R)$ from the direct sum of $n$ copies of $\cl H$ to the direct sum of $m$ copies of $\cl H.$

The following fact is also used in Agler's proof \cite{Ag}.

\begin{lemma} Given any factorization of the form in Agler's theorem \\
$ I_m- F(z)F(w)^* = \sum_{i=1}^N(1-z_i\overline w_i)K_i(z,w),$ each of the functions $K_i(z,w)$ satisfy, $\|K_i(z,w)\|^2 \leq \frac{1}{(1-|z_i|^2)(1-|w_i|^2)} $ and hence are locally bounded in $\bb D^N.$
\end{lemma}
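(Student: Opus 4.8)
The plan is to exploit two things: the positive-definiteness of each kernel $K_i$, and the factorization identity restricted to the diagonal $z=w$. First I would record the following standard consequence of positive-definiteness: since each $K_i\colon\bb D^N\times\bb D^N\to M_m$ is positive definite, for any fixed pair of points $z,w\in\bb D^N$ the $2\times 2$ operator matrix whose blocks are $K_i(z,z),\ K_i(z,w),\ K_i(w,z),\ K_i(w,w)$ is positive semidefinite (in particular $K_i(w,z)=K_i(z,w)^*$). The Cauchy--Schwarz inequality for positive $2\times 2$ operator matrices — equivalently, the fact that such a positive block matrix factors through a column with a contractive middle factor — then yields
$$\|K_i(z,w)\|^2 \le \|K_i(z,z)\|\,\|K_i(w,w)\|,$$
so it suffices to bound the diagonal values $\|K_i(z,z)\|$.

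For that, I would set $w=z$ in Agler's factorization to get
$$I_m - F(z)F(z)^* = \sum_{i=1}^N (1-|z_i|^2)\,K_i(z,z).$$
Each summand $(1-|z_i|^2)K_i(z,z)$ is positive semidefinite, since $|z_i|<1$ and $K_i(z,z)\ge 0$; and $F(z)F(z)^*\ge 0$, so the whole sum is $\le I_m$. Hence each individual summand is dominated, in the operator order, by $I_m$, i.e. $(1-|z_i|^2)K_i(z,z)\le I_m$, which gives $\|K_i(z,z)\|\le (1-|z_i|^2)^{-1}$ and likewise $\|K_i(w,w)\|\le (1-|w_i|^2)^{-1}$.

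Combining the two displayed inequalities gives $\|K_i(z,w)\|^2\le \frac{1}{(1-|z_i|^2)(1-|w_i|^2)}$, as claimed. Local boundedness is then immediate: for any compact $Q\subseteq\bb D^N$ there is $r<1$ with $|z_i|\le r$ for every $z\in Q$ and every $i$, so $\|K_i(z,w)\|\le (1-r^2)^{-1}$ for all $z,w\in Q$.

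I do not anticipate a real obstacle; the proof is short. The only points needing a little care are (a) citing or briefly justifying the Cauchy--Schwarz bound for positive operator $2\times 2$ matrices, and (b) being explicit that it is the positivity of \emph{each} term in the diagonal identity — not merely of the sum — that licenses passing from the bound on $\sum_i(1-|z_i|^2)K_i(z,z)$ to the bound on the single term $(1-|z_i|^2)K_i(z,z)$.
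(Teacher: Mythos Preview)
Your proof is correct and follows essentially the same approach as the paper: bound $\|K_i(z,z)\|$ by $(1-|z_i|^2)^{-1}$ via the diagonal of the factorization and positivity of the other summands, then invoke the Cauchy--Schwarz inequality for the positive $2\times 2$ block matrix coming from positive-definiteness of $K_i$. The only differences are expository---you present the two steps in the reverse order and spell out the justification for dropping the other summands a bit more explicitly.
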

\begin{proof}
For $z= (z_1, \ldots , z_N) \in \bb D^N,$ we have that 
$(1- |z_i|^2)K_i(z,z) \\ \le I_m  -F(z)F(z)^* \le I_m,$
and hence, $\|K(z,z)\| \le \frac{1}{ 1 - |z_i|^2}.$

As eack $K_i$ is positive definite, using the positivity of the $2 \times 2$ block matrix corresponding to the pair of points, $z,w \in \bb D^N,$
we obtain $\|K_i(z,w)\|^2 \leq \|K_i(z,z)\| \cdot \|K_i(w,w)\| \le \frac{1}{(1-|z_i|^2)(1-|w_i|^2)}.\\$ 

This inequality shows that $\|K_i(z,w)\|$ is uniformly bounded on $(r \bb D^N) \times (r \bb D^N)$ for any $0 < r < 1,$ and hence locally bounded.
\end{proof}

\begin{thm} Let $ F\in \cl M_{m,n}(H_{u}(\bb D ^ N))$ with $ F(z) = \sum_I A_I z^I $ 
for $ z\in \bb D^N.$ Then the sequence $\{ P_n\}$ of matrices of polynomials 
$ P_n(z) = \sum_{|I|\leq n}(1 - \frac{|I|}{n+1})A_I z^I$ converges locally uniformly to F 
and $\|P_n\|_{\cl U} \le \|F\|_{\cl U}$ for each n. Conversely, if there is a sequence of $ P_n,$ matrices of polynomials, 
converging to F pointwise on $\bb D^N$ with $ \| P_n\|_{\cl U} \leq 1 $ for each n, then $ \| F\|_{\cl U}\leq 1. $
\end{thm}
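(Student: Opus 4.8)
The plan is to recognize each $P_n$ as the $n$th Ces\`aro (Fej\'er) mean of the power series of $F$, and to observe that such a mean is obtained by averaging the \emph{rotations} of $F$ against a nonnegative kernel of total mass one. The argument then rests on one elementary remark: if $T=(T_1,\dots,T_N)$ is a commuting $N$-tuple of strict contractions, then so is $e^{it}T=(e^{it}T_1,\dots,e^{it}T_N)$ for every real $t$, and $F(e^{it}T)=\sum_I e^{i|I|t}\,A_I\otimes T^I$; consequently $\|F(e^{it}T)\|\le\|F\|_{\cl U}$ for all $t$.

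Let $\Phi_n(t)=\sum_{k=-n}^{n}\bigl(1-\tfrac{|k|}{n+1}\bigr)e^{ikt}=\tfrac{1}{n+1}\bigl|\sum_{j=0}^{n}e^{ijt}\bigr|^{2}$ be the Fej\'er kernel, so that $\Phi_n\ge0$, $\tfrac1{2\pi}\int_{-\pi}^{\pi}\Phi_n(t)\,dt=1$, and $\tfrac1{2\pi}\int_{-\pi}^{\pi}e^{ikt}\Phi_n(t)\,dt=\bigl(1-\tfrac{|k|}{n+1}\bigr)_+$. For $z\in\bb D^N$ the Cauchy estimates give $\|A_I\|\le C_\rho\,\rho^{-|I|}$ for each $\rho<1$, so $\sum_I\|A_I\|\,|z^I|<\infty$ and the series $F(e^{it}z)=\sum_I A_I z^I e^{i|I|t}$ converges uniformly in $t$; integrating term by term gives $P_n(z)=\tfrac1{2\pi}\int_{-\pi}^{\pi}F(e^{it}z)\,\Phi_n(t)\,dt$. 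The same interchange is legitimate with a commuting $N$-tuple $T$ of strict contractions in place of $z$, since $\sum_I\|A_I\|\,\|T^I\|\le\sum_I\|A_I\|\,r^{|I|}<\infty$ with $r=\max_j\|T_j\|<1$ (so that $t\mapsto F(e^{it}T)$ is norm-continuous), and it yields $P_n(T)=\tfrac1{2\pi}\int_{-\pi}^{\pi}F(e^{it}T)\,\Phi_n(t)\,dt$. Hence
\[
\|P_n(T)\|\ \le\ \frac1{2\pi}\int_{-\pi}^{\pi}\|F(e^{it}T)\|\,\Phi_n(t)\,dt\ \le\ \|F\|_{\cl U},
\]
and taking the supremum over all such $T$ gives $\|P_n\|_{\cl U}\le\|F\|_{\cl U}$. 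Local uniform convergence $P_n\to F$ is the classical fact about Ces\`aro means of a Taylor series: on $\rho\bb D^N$ one has $\|F(z)-P_n(z)\|\le\sum_{|I|>n}\|A_I\|\rho^{|I|}+\tfrac1{n+1}\sum_{|I|\le n}|I|\,\|A_I\|\rho^{|I|}$, and both terms tend to $0$ as $n\to\infty$ because $\sum_I|I|\,\|A_I\|\rho^{|I|}<\infty$, again by the Cauchy estimates.

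For the converse, suppose $P_n\to F$ pointwise on $\bb D^N$ with $\|P_n\|_{\cl U}\le1$. Then $\|P_n\|_\infty\le\|P_n\|_{\cl U}\le1$ for every $n$, so $\{P_n\}$ is locally bounded and hence, by Montel's theorem, a normal family; since it converges pointwise on $\bb D^N$, the convergence is locally uniform, $F$ is analytic, and, writing $P_n=\sum_I A_I^{(n)}z^I$ and $F=\sum_I A_I z^I$, the Taylor coefficients converge: $A_I^{(n)}\to A_I$ for every $I$. The Cauchy estimates applied to $\|P_n\|_\infty\le1$ give $\|A_I^{(n)}\|\le1$ and hence $\|A_I\|\le1$, so for any commuting $N$-tuple $T$ of strict contractions with $r=\max_j\|T_j\|<1$ the series $F(T)=\sum_I A_I\otimes T^I$ converges absolutely and
\[
\|P_n(T)-F(T)\|\ \le\ \sum_I\|A_I^{(n)}-A_I\|\,\|T^I\|\ \le\ \sum_I\|A_I^{(n)}-A_I\|\,r^{|I|}.
\]
The last sum tends to $0$ as $n\to\infty$ by dominated convergence, its terms being dominated by $2r^{|I|}$ with $\sum_I r^{|I|}<\infty$. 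Therefore $\|F(T)\|=\lim_n\|P_n(T)\|\le1$, and since $T$ was arbitrary, $F\in\cl M_{m,n}(H_{u}(\bb D^N))$ with $\|F\|_{\cl U}\le1$.

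No step here is deep. The computations that need a little care are the two places where an integral, respectively a limit, is interchanged with an infinite power series; both are controlled in the same way, since evaluation at a strict contraction multiplies the $I$th term by something of norm at most $r^{|I|}$ with $r<1$, after which the Cauchy estimates make everything absolutely convergent. The one genuinely essential point — and the reason the Fej\'er kernel acts as an approximate identity in the $\cl U$-norm just as in the sup-norm — is the invariance of the class of commuting $N$-tuples of strict contractions under the rotations $T\mapsto e^{it}T$.
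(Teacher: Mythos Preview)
Your proof is correct and follows essentially the same approach as the paper: both represent $P_n$ as the Fej\'er average $\tfrac{1}{2\pi}\int F(e^{it}z)\Phi_n(t)\,dt$, transfer this to $P_n(T)$ by norm-continuity of $t\mapsto F(e^{it}T)$, and exploit rotation-invariance of commuting strict contractions to bound $\|P_n\|_{\cl U}$. The only differences are cosmetic: for locally uniform convergence the paper invokes the Ces\`aro-mean identity $P_n=(S_0+\cdots+S_n)/(n+1)$ while you give a direct coefficient estimate, and for the converse the paper passes through a weak$^*$-convergent subsequence and the Riesz functional calculus whereas you use Montel plus a dominated-convergence argument on coefficients---your route here is in fact slightly cleaner, as it yields convergence of the full sequence $P_n(T)\to F(T)$ without extracting a subsequence.
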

\begin{proof}
Fix an $ n\in \bb N $ and consider the Fejer kernel,
\begin{center}
 $F_n(\theta) = \frac{1}{n+1}\sum_{k,l=0}^n e^{i(k-l)\theta} $    for $\theta \in [0, 2\pi]. $
\end{center}
Note that for each fixed $z \in \bb D^N$ the function $ \theta \to F(ze^{i \theta}) = F(z_1 e^{i \theta}, \ldots, z_N e^{i \theta})$ is continuous.
We define $ P_n(z) = \frac{1}{2\pi} \int_0^{2\pi}F(ze^{i\theta})F_n(e^{i\theta})d\theta \quad$ for $z\in \bb D^N,$ where the integration is in the Riemann sense.
A direct calculation shows that  $ P_n(z) = \sum_{|I|\leq n}(1 - \frac{|I|}{n+1}) A_I z_I,$ where $|I|= i_1+ \cdots + i_N.$

Next check that for a fixed commuting $N$-tuple of strict contractions, $T=(T_1, \ldots, T_N),$ on a Hilbert space $\cl H,$ the map $\theta \to F(T_1 e^{i \theta}, \ldots, T_Ne^{i \theta})$ is continuous from the interval into $B(\cl H)$ equipped with the norm topology. This follows from the fact that since we are dealing with strict contractions, $F(T e^{i \theta})$ is a norm limit of partial sums of its power series.
It now follows that 
\begin{center}
$P_n(T) = \frac{1}{2\pi}\int_0^{2\pi}F(Te^{i\theta})F_n(e^{i\theta})d\theta,$
\end{center}
where the integration is again in the Riemann sense.

Thus,   $$\|P_n(T)\| \leq \frac{1}{2\pi}\int_0^{2\pi}\|F(Te^{i\theta})\|F_n(e^{i\theta})d\theta\\
\leq \|F\|_{u}$$
and we have shown that
$\|P_n\|_{u}\leq \|F\|_{u}.$

The fact that $ P_n$ converges to F locally uniformly is a standard result for scalar valued functions. To see directly in our case note that for
$z\in \bb{D}^N,$
we have that\\
$P_n(z) = \sum_{|I|\leq n}\frac{(n+1-|I|)}{n+1}A_Iz^I
= \frac{S_0(z)+\dots+S_n(z)}{n+1},$\\
where $S_k(z) = \sum_{|I|\leq k}A_Iz^I, k=1,\dots,n.\\$
So, $P_n \longrightarrow F$ locally uniformly on $\bb D^N.$

For the converse, let $\{P_n\}$ be a sequence of $\cl M_{m,n}$ valued polynomials with
 $\|P_n\|_{u} \le 1$ and converging to F pointwise on $\bb{D}^N.\\$
For each n, $\|P_n\|_\infty \leq \|P_n\|_{u} \le 1.$
This implies that there exist a subsequence $\{P_{n_k}\}$ which converges to a function 
$G\in \cl M_{m,n}(H^\infty(\bb{D}^N))$ in the weak* topology and, hence, that 
$\{P_{n_k}\}$ converges to G uniformly on compact subsets of $\bb{D}^N.\\$
Thus, G = F and  $\{P_{n_k}\}$ converges to F uniformly on compact subsets of $\bb D^N.\\$
If we now take $T=(T_1,\dots, T_N)$ a commuting $N$-tuple of strict contractions, then there is an $r < 1,$ so that $\|T_i\| \le r$ for all $i.$
Since the polynomials converge to $F$ uniformly on the closed polydisk of radius $\frac{1+r}{2}$ it follows by the Riesz functional calculus that 
$P_{n_k}(T_1,\dots,T_N)\longrightarrow F(T_1,\dots,T_N)$ in norm.\\ 

Therefore, $\|F(T)\| = \lim_{k\to \infty}\|P_{n_k}(T)\| \leq 1$ and, hence, $\| F\|_{u} \leq 1.$
\end{proof}

We can now prove Agler's theorem.

\begin{thm}[Agler's Factorization Theorem]
Let $F \in \cl M_{m,n}(\cl H^\infty(\bb D^N)).$ Then $\|F\|_{u} \leq 1$ if and only if there exist
positive definite functions $K_i:\bb D^N\times \bb D^N \to M_m,1 \leq i\leq N,$ analytic in the first variable and coanalytic in the second variable, such that\\
\begin{center}
$1 - F(z)F(w)^* = \sum_{i=1}^N(1 -z_i\overline{w_i})K_i(z,w)$
\end{center}
where $ z = (z_1,\dots,z_N), w = (w_1,\dots,w_N)\in \bb D^N.$ 
\end{thm}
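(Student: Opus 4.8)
The plan is to deduce Agler's theorem from the polynomial version (Theorem~1) together with the approximation result (Theorem~4) and the local boundedness estimate (Lemma~5). The only genuine content to establish beyond bookkeeping is a normal-families argument that lets us pass to a limit in the factorization.

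First I would handle the easy direction. If $1 - F(z)F(w)^* = \sum_{i=1}^N (1 - z_i\overline{w_i})K_i(z,w)$ with each $K_i$ positive definite, then for any commuting $N$-tuple of strict contractions $T$, substituting $z,w \mapsto T$ (justified as in the proof of (iii)$\Rightarrow$(i) in Theorem~1, by uniqueness of power series) yields $I_m - F(T)F(T)^* = \sum_{i=1}^N P_i$-type positive terms $\ge 0$, so $\|F(T)\| \le 1$; since $T$ was arbitrary, $\|F\|_u \le 1$.

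For the forward direction, assume $\|F\|_u \le 1$. By Theorem~4, the Fej\'er means $P_n(z) = \sum_{|I|\le n}(1 - \tfrac{|I|}{n+1})A_I z^I$ satisfy $\|P_n\|_u \le \|F\|_u \le 1$ and converge to $F$ locally uniformly. To apply Theorem~1, which needs strict inequality, I would rescale: put $Q_n(z) = (1 - \tfrac1n)P_n(z)$, so $\|Q_n\|_u < 1$, and $Q_n \to F$ locally uniformly. Applying Theorem~1 (the Corollary form is cleanest), for each $n$ there are positive definite matrix-valued polynomials $K_0^{(n)}, \ldots, K_N^{(n)}$ with $I_m - Q_n(z)Q_n(w)^* = K_0^{(n)}(z,w) + \sum_{i=1}^N (1 - z_i\overline{w_i})K_i^{(n)}(z,w)$. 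Absorbing the extra term, define $\widetilde K_1^{(n)}(z,w) = \dfrac{K_0^{(n)}(z,w) + K_1^{(n)}(z,w)}{1 - z_1\overline{w_1}}$ and $\widetilde K_i^{(n)} = K_i^{(n)}$ for $i \ge 2$; these are positive definite, analytic in $z$, coanalytic in $w$, and satisfy $I_m - Q_n(z)Q_n(w)^* = \sum_{i=1}^N (1 - z_i\overline{w_i})\widetilde K_i^{(n)}(z,w)$. By Lemma~5 applied to the pair $(Q_n, \widetilde K_i^{(n)})$ — noting $\|Q_n\|_u \le 1$ is exactly what that lemma needs — we get $\|\widetilde K_i^{(n)}(z,w)\| \le \frac{1}{\sqrt{(1-|z_i|^2)(1-|w_i|^2)}}$, a bound uniform in $n$ on compact subsets of $\bb D^N \times \bb D^N$.

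Now comes the main step, which I expect to be the only real obstacle: extracting a convergent subsequence of each $\{\widetilde K_i^{(n)}\}_n$. Since these are holomorphic in $z$ and antiholomorphic in $w$ (equivalently, holomorphic in $(z,\bar w)$) and locally uniformly bounded by the previous paragraph, Montel's theorem (for $M_m$-valued functions, applied entrywise) gives a subsequence along which all $N$ families converge locally uniformly to limits $K_i(z,w)$. One must take a single subsequence working for all $i$ simultaneously, and one should first pass to the subsequence $\{Q_{n_k}\}$ along which things converge. The limits $K_i$ are then holomorphic in $z$, antiholomorphic in $w$; positive definiteness passes to the limit because for any finite set of points the relevant block matrix is a limit of positive semidefinite matrices; and the identity $I_m - F(z)F(w)^* = \sum_{i=1}^N (1 - z_i\overline{w_i})K_i(z,w)$ follows by taking limits on both sides, using $Q_{n_k} \to F$ locally uniformly. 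This completes the proof. The delicate points to state carefully are the joint holomorphy/antiholomorphy needed for Montel, the uniform-in-$n$ local bound from Lemma~5, and the diagonal extraction of a common subsequence for the $N+1$ objects involved.
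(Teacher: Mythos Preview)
Your approach is essentially the paper's: Fej\'er approximation, Theorem~1/Corollary~2 for each polynomial, absorb the extra term into the $i=1$ slot, use the lemma for local bounds, Montel, pass to the limit. Two small points deserve correction. First, your absorption formula is miswritten: from $I_m-Q_nQ_n^*=K_0^{(n)}+\sum_i(1-z_i\overline{w_i})K_i^{(n)}$ one needs $\widetilde K_1^{(n)}=K_1^{(n)}+\dfrac{K_0^{(n)}}{1-z_1\overline{w_1}}$, not $\dfrac{K_0^{(n)}+K_1^{(n)}}{1-z_1\overline{w_1}}$; with your formula the identity fails. Second, in the converse you cannot simply ``substitute $z,w\mapsto T$'' into $K_i(z,w)$ by analogy with Theorem~1(iii)$\Rightarrow$(i): there the $P_i$ were polynomial matrices, whereas here the $K_i$ are merely positive definite analytic/coanalytic functions, and $K_i(T,T^*)$ has no a~priori meaning. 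The missing step (which the paper supplies) is the Kolmogorov/RKHS factorization $K_i(z,w)=F_i(z)F_i(w)^*$ with $F_i$ analytic; then $F_i(T)(I-T_iT_i^*)F_i(T)^*\ge 0$ gives the conclusion. Your extra care in rescaling to force $\|Q_n\|_u<1$ is a genuine improvement over the paper's statement, which asserts $\|P_n\|_u<1$ without justification.
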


\begin{proof}
Let $\|F\|_{u} \leq 1.$  By Theorem~3, there exists a sequence $\{P_n\}$ of matrices
of polynomials such that $P_n$ converges to F locally uniformly on $\bb D^N$ with $\|P_n\|_{u} < 1$ for each n. 

Since $\|P_n\|_{u} < 1,$ by Theorem~1 and the remarks following, there exist
positive definite functions $K_i^n:\bb D^N \times \bb D^N\to M_m, 1\leq i\leq N,$ analytic in the first variable and coanalytic in the second, such that
\begin{center}
$1 - P_n(z)P_n(w)^* = \sum_{i=1}^N(1 - z_i\overline{w_i})K_i^n(z,w), $
\end{center}
where $ z = (z_1,\dots,z_N), w = (w_1,\dots,w_N).$

Moreover by Proposition 3, for each $ (z,w) \in \bb D^N \times \bb D^N$ and for each n and i,  $\|K_i^n(z,w)\|^2 \leq \frac{1}{(1-|z_i|^2)(1-|w_i|^2)}.$
So, for each fixed $ i\in\{1,\dots,N\}$ we have a locally uniformly bounded sequence $\{K_i^n : n\in \bb N\}$ of 
analytic-coanalytic functions.  Thus, each of the $m^2$ functions that make up the coefficients of the matrix-valued functions, 
$K_i^n : \bb D^N \times \bb D^N \longrightarrow M_m,$ is locally uniformly bounded.
 
Hence, applying Montel's theorem($m^2N$ times) there exists a subsequence $\{ n_k \}$ and analytic-coanalytic functions $K_i : \bb D^N \times \bb D^N \to M_m, 1 \le i \le N$ such that
the subsequence $\{K_i^{n_k}\}$ converges to $K_i$ locally uniformly on $\bb D^N \times \bb D^N.\\$ Since each function in the subsequence is positive definite, it follows that each $K_i$ is positive definite.
Taking limits, we have that
$$I_m - F(z) F(w)^* = \lim_k (I_m - P_{n_k}(z)P_{n_k}(w)^*) = \sum_{i=1}^N(1 - z_i \overline w_i) K_i(z,w).$$
The proof of the converse is identical to the argument given by Agler\cite{Ag}.
We briefly recall it for completeness. Assume that we are given that
 $$I_m - F(z)F(w)^* = \sum_{i=1}^N(1-z_i \overline{w_i})K_i(z,w),$$ where each $K_i$ is a positive definite $M_m$-valued function that is analytic in $z$ and coanalytic in $w$.

From the general theory of reproducing kernel Hilbert spaces, it follows that there exists separable Hilbert spaces $\cl H_i, 1 \le i \le N$ and analytic functions, $F_i: \bb D^N \to B(\cl H_i, \bb C^m), 1 \le i \le N,$ such that $K_i(z,w) = F_i(z)F_i(w)^*, 1 \le i \le N.$  For those familiar with this theory, let $\cl H_i$ be the reproducing kernel Hilbert space of $\bb C^m$-valued functions constructed from $K_i$ and let $F_i(z): \cl H_i \to \bb C^m$ be evaluation at $z.$ Choosing an orthonormal basis for $\cl H_i,$ one may regard $F_i(z)$ as an $m \times \infty$ matrix of analytic functions on $\bb D^N.$
One then verifies that for any commuting $N$-tuple of strict contractions,
$$I_m - F(T)F(T)^* = \sum_{i=1}^NF_i(T)(I - T_iT_i^*)F_i(T)^*  \ge 0,$$ and so, $\| F(T) \| \le 1.$  Since $T$ was arbitrary, we have that $\|F\|_{u} \le 1.$
\end{proof}

As we showed in the above proof, each analytic-coanalytic positive definite function, $K_i: \bb D^N \times \bb D^N \to M_m,$ can be factored as $K_i(z,w) = F_i(z)F_i(w)^*,$ where $F_i$ is an $m \times \infty$ matrix of analytic functions on $\bb D^N.$ Often Agler's factorization theorem is written in this equivalent form, i.e., as
$$I_m - F(z)F(w)^* = \sum_{i=1}^N (1 -z_i \overline{w_i}) F_i(z)F_i(w)^*.$$

\end{document}